\theoremstyle{plain} 
\newtheorem{theorem}{\indent\sc Theorem}[section]
\newtheorem{lemma}[theorem]{\indent\sc Lemma}
\newtheorem{corollary}[theorem]{\indent\sc Corollary}
\newtheorem{proposition}[theorem]{\indent\sc Proposition}
\theoremstyle{definition} 
\newtheorem{definition}[theorem]{\indent\sc Definition}
\newtheorem{remark}[theorem]{\indent\sc Remark}
\newtheorem{example}[theorem]{\indent\sc Example}
\newcommand{\CC}{\mathbb{C}}
\newcommand{\C}{\mathcal{C}}
\newcommand{\FF}{\mathbb{F}}
\newcommand{\ZZ}{\mathbb{Z}}
\newcommand{\HH}{\mathbb{H}}
\newcommand{\OO}{\mathbb{O}}
\newcommand{\RR}{\mathbb{R}}
\newcommand{\Ric}{\mathrm{Ric}}
\newcommand{\Lie}{\mathrm{Lie}}
\newcommand{\SU}{\mathrm{SU}}
\def\dim{\mathop{\hbox{\rm dim}}}
\def\tr{\mathop{\rm tr}}
\newcommand{\spf}{\mathfrak{sp}}
\newcommand{\inder}{\mathfrak{inder}}
\newcommand{\sof}{\mathfrak{so}}
\newcommand{\slf}{\mathfrak{sl}}
\newcommand{\gl}{\mathfrak{gl}}
\newcommand{\id}{\mathrm{id}}
\newcommand{\mm}{\mathfrak{m}}
\newcommand{\hh}{\mathfrak{h}}
\newcommand{\g}{\mathfrak{g}}
\title[  A new family of homogeneous Einstein manifolds]{  A new family of homogeneous Einstein manifolds \\based on symplectic triple systems}
\author[C.~Draper]{ 
Cristina Draper Fontanals${}^*$ 
}  
\subjclass[2010]{Primary  
53C30.  
Secondary 
53C50,   	
 	17A40,   	
17B60.   	
}
\keywords{Einstein metric, symplectic triple system, 
 homogeneous manifold,   curvature, 3-Sasakian manifold, Freudenthal triple system.  }
\thanks{${}^*$ Supported by the Spanish Ministerio de Econom\'{\i}a y Competitividad---Fondo Europeo de
Desarrollo Regional (FEDER) MTM2016-76327-C3-1-P, and by the Junta de Andaluc\'{\i}a grant FQM-336, with FEDER funds} 
\address{%
Departamento de Matem\'{a}tica Aplicada,  Escuela de Ingenier\'\i as Industriales,\endgraf
Universidad de M\'{a}laga, 
 29071 M\'{a}laga,  
Spain
}
\email{cdf@uma.es}
\begin{document}


\maketitle 


\begin{abstract}
For each  simple symplectic triple system over the real numbers, the standard enveloping Lie algebra and the algebra of inner derivations of the triple provide a  reductive pair related to a semi-Riemannian homogeneous manifold. 
It is proved that   this   is an Einstein manifold. 
 \end{abstract}

\section{Introduction }

This note arises from the Second International Workshop on Nonassociative algebras held in Porto, May 2019. My talk there dealt with some results on 3-Sasakian manifolds \cite{nues3Sas} and how their close relationship with complex symplectic triple systems could bolster the study of some concrete questions related to   curvature and holonomy \cite{hol}. The fact that, for a real symplectic triple system, the standard enveloping algebra is never a compact Lie algebra, made me wonder how the geometry of the related manifold could be.  This geometry is of course quite different from 3-Sasakian geometry, since 3-Sasakian manifolds are Riemannian and compact. Nevertheless, the  algebraic relation   should have   geometric implications. 
To be precise, the
 first intuition is that these manifolds may be Einstein, just as 3-Sasakian manifolds are.
Indeed, this is the case, which can be proved using only algebraic properties of the non-associative structure involved, i.e.  that of the symplectic triple system. This is the aim of this note, namely to provide a purely algebraic proof that these manifolds are Einstein, that is, their Ricci tensor is multiple of the metric.

An  exhaustive  survey  of  results  concerning  Einstein metrics on compact manifolds of dimension at least four is found in the classical monograph \cite{Besse}.  Besse group tries to answer the following question: \emph{are there any best (or nicest, or distinguished) Riemannian structures on [a manifold] $M$?} 
 Note that, for surfaces, the best metrics are those  of constant (Gaussian) curvature. For dimension greater than 2, a good generalization is being of constant Ricci curvature. Recall that the Ricci curvature of a semi-Riemannian manifold is a quadratic differential form defined by the trace of the curvature tensor. The choice of an Einstein metric as privileged in some way can be justified both mathematically and physically. On one hand, there is a limited number of such metrics, on the other hand, Einstein metrics are the solutions to the Euler-Lagrange equations for stationary points of the integral of the scalar curvature. A key result is that every $G$-homogeneous Riemannian manifold of irreducible isotropy is Einstein \cite{Wolf68}. Also interesting in this regard is the complete classification of simply connected $G$-homogeneous Riemannian manifold ($G$ a connected compact simple Lie group) such that the metric induced by the Killing form is Einstein \cite{Ziller85}. There is both a regular interest and a large number of recent works on homogeneous Einstein metrics, see \cite{Alek} for a review. To cite just a few results, \cite{Kerr} has proved that every compact simply connected homogeneous space of dimension at most 11 admits a homogeneous Einstein metric, whereas   the compact manifold $\SU(4)/\SU(2)$ is a counterexample in  dimension 12  as shown in \cite{dim12}. In this latter work, the techniques to prove the existence   of homogeneous Einstein metrics follow a variational  approach, by searching  critical points of the total scalar curvature functional.
A variety of works about homogeneous Einstein metrics has been contributed by Arvanitoyeorgos and his collaborators. For instance, \cite{Arvani3} finds which 
  compact simple Lie groups  
  admit non-naturally reductive Einstein metrics, 
  \cite{Arvaniflag} constructs explicit invariant non-K\"ahler Einstein metrics on generalized flag manifolds, and
    \cite{Arvani2}
   obtains new invariant Einstein metrics on the quaternionic  Stiefel  manifold   of all orthonormal $p$-frames in $\HH^n$.
   
   In contrast, not much has been settled on homogeneous non-compact Einstein manifolds. One of the most cited papers on  this topic  is \cite{Heber},
  which presents a systematic study of the existence and moduli of homogeneous Einstein metrics with negative scalar curvature. Other works to be pointed out are \cite{Tamaru2}  on non-compact homogeneous Einstein manifolds attached to graded Lie algebras   or  \cite{Tamaru}  on the solvmanifolds which are  naturally homogeneous submanifolds of symmetric spaces of non-compact type. Anyway, all these examples are Riemannian.

 The family of manifolds that are studied in this work are semi-Riemannian, thus constituting a broad field to explore.   However, the choice of such family within this broad field, far from being arbitrary, stems from its close relationship with  (Riemannian) 3-Sasakian manifolds, which are, in turn, very well known \cite{libroGB}. It is worth mentioning that our tools for finding this family of Einstein metrics differ completely from the ones in the above cited works. Such previous techniques include combinatorial arguments, computational methods such as Gr\"obner basis, and  some indirect arguments too. In contrast, we resort to a direct proof that only uses algebraic properties of the related ternary structure. An appealing feature of the work is that all the cases are dealt with in a unified approach.  
 Besides, as the trace is invariant by complexification, some (well-known) results on Einstein metrics on   compact 3-Sasakian manifolds \cite{kashiwada} could be recovered from our work.

 We would like to emphasize the role of non-associative algebras (and non-associative structures, in general) in the study of Geometry. Of course,  the interplay between Algebra and Geometry in the theory of homogeneous spaces  is obvious,  but it is not so clear the great amount of non-associative structures hidden after some geometric objects, in particular of ternary type.  Without being exhaustive, some examples to be remarked could be Jordan and Lie triple systems  \cite{libroJordan},  or Lie-Yamaguti algebras \cite{Fabi}. Also, some relations between ternary non-associative structures in general (and symplectic triple systems in particular) and physics are explained in \cite{kerner}. In our work, Freudenthal triple systems \cite{Meyberg} will appear   in connection to a new tensor related to the curvature (see Remark~\ref{re_freu}).\smallskip

   This work is structured as follows. Section~\ref{se_alg} describes the symplectic triple system structure, as well as two important attached Lie algebras, namely, the standard enveloping algebra and the Lie algebra of the inner derivations of the triple. These two pieces will be, respectively, the Lie algebras of the groups $G$ and $H$ of our homogeneous manifolds $G/H$. It is also listed a collection of examples of simple symplectic triple systems  that this theory can be applied to, precisely those with split standard enveloping algebra. Section~\ref{se_geo} deals with the related manifolds, starting with the introduction of a $G$-invariant metric inspired in the 3-Sasakian case. Thereupon the computations in \cite{hol} can be used to have a complete description of the Riemannian curvature tensor in terms of the triple product and of the symplectic form attached to the symplectic triple system. This algebraic expression of the curvature tensor is very convenient for our purposes.
 The key is to introduce a new tensor $Q$ in Lemma~\ref{le_defQ} which measures how far  the manifolds are from being of constant curvature. This tensor turns out to have zero trace in the manifolds under study (Proposition~\ref{pr_lacuenta} (b)), which equivales to the fact that the Ricci tensor is multiple of the metric (Corollary~\ref{main}). All along this note our approach  is   algebraic.

\section{The algebraic structure }\label{se_alg}

\subsection{Symplectic triple systems }\label{se_sts}

The ternary structure of symplectic triple system made its first appearance in \cite{ternarias}, and its standard enveloping algebra in \cite{alb_sts}. The material here is mainly extracted from \cite{hol}. 

\begin{definition}\label{def1}
Let   $T$ be a real vector space endowed with 
a non-zero alternating form $(\, ,\,  )\colon T\times T\to\mathbb R$, 
  and a triple product $[\, ,\, ,\,]\colon T\times T\times T\to T$.
  It is said that $(T,[\, ,\, ,\,],(\, ,\,  ))$ is a \emph{symplectic triple system} if satisfies 
\begin{align}
[x,y,z]=[y,x,z],\label{eq_uno}\\
[x,y,z]-[x,z,y]=(x,z)y-(x,y)z+2(y,z)x,\label{eq_dos}\\
[x,y,[u,v,w]]=[[x,y,u],v,w]+[u,[x,y,v],w]+[u,v,[x,y,w]],\label{eq_tres}\\
([x,y,u],v)=-(u,[x,y,v]),\label{eq_cuatro}
  \end{align}
  for any $x,y,z,u,v,w\in T$. 
  \end{definition} 
  An \emph{ideal} of the symplectic triple system $T$ is a subspace $I$ of $T$ such that $[T,T,I]+[T,I,T]\subset I$ and the system is said to be  \emph{simple} if $[T,T,T]\ne0$ and it contains no proper ideal. 
  The simplicity of $T$ is equivalent to the non-degeneracy of the bilinear form \cite[Proposition~2.4]{alb_triples}.\footnote{In this reference,    $\dim T>2$ is required, but this is only used in the proof in the case the ground field has characteristic 3. Such hypothesis is not necessary if the field is $\RR$. } 
   If   $d\in \gl(T)=\mathop{\rm{End}}_{\FF}(T)^-$, $d$ is called a \emph{derivation} if
$$
d([u,v,w])=[d(u),v,w]+[u,d(v),w]+[u,v,d(w)],
$$
for all $u,v,w\in T$. For instance, Eq.~\eqref{eq_tres} tells that  the map $d_{x,y}\colon T\to T$, $d_{x,y}(z):=[x,y,z]$, is a derivation for any $x,y\in T$. Moreover, Eq.~\eqref{eq_tres} also implies  that 
$$
  \inder(T):=\{\sum_{i=1}^nd_{x_i,y_i}:x_i,y_i\in T,n\in\mathbb{N}\},
  $$
  is a Lie subalgebra of the general linear algebra $\gl(T)$, since 
  $
  [d_{x,y},d_{u,v}]=d_{[x,y,u],v}+d_{u,[x,y,v]}\in \inder(T).
  $
This receives the name of algebra of \emph{inner derivations}.   
Now, consider $V=\RR e_1\oplus\RR e_2$ a two-dimensional vector space with the  non-zero alternating bilinear form $\langle.,.\rangle\colon V\times V\to\RR$ given by $\langle e_1,e_2\rangle=1$. Note that, by identifying  each 
$\gamma \in \spf(V,\langle.,.\rangle)=\{\gamma\in\gl(V): \langle \gamma(a),b\rangle +\langle a,\gamma(b)\rangle =0\ \forall a,b\in V\}$ with its matrix relative to the basis $\{e_1,e_2\}$ of $V$, the symplectic Lie algebra 
$\spf(V,\langle.,.\rangle)$ coincides with $\mathfrak{sl}_2(\RR)$, because
$$
\gamma_{e_1,e_2}=\begin{pmatrix}-1&0\\0&1\end{pmatrix}; \quad
\gamma_{e_1,e_1}=\begin{pmatrix}0&2\\0&0\end{pmatrix};\quad
\gamma_{e_2,e_2}=\begin{pmatrix}0&0\\-2&0\end{pmatrix};
$$
being $\gamma_{a,b}:=\langle a,.\rangle b+\langle b,.\rangle a\in \spf(V,\langle.,.\rangle)$. Consider the vector space
$$
  \g(T):=\spf(V,\langle.,.\rangle)\oplus \inder(T)\oplus\, V\otimes T,
  $$
  and endow it with the following skew-symmetric product, when extending by linearity:
   \begin{itemize}
  \item  $[\gamma +d,\gamma' +d']:=[\gamma,\gamma']+[d,d']$;
  \item $[\gamma +d, a\otimes x]:=\gamma(a)\otimes x+a\otimes d(x)$;
  \item $[a\otimes x,b\otimes y]:=(x,y)\gamma_{a,b}+\langle a,b\rangle d_{x,y}$;
  \end{itemize}
  where $\gamma,\gamma'\in\spf(V,\langle.,.\rangle)$, $d,d'\in  \inder(T)$, $a,b\in V$ and $x,y\in T$.
  Thus $( \g(T),[.,.])$ is a  $\ZZ_2$-graded Lie algebra, with homogeneous components given by
\begin{equation}\label{eq_grad}
  \g(T)_{\bar0}:=\spf(V,\langle.,.\rangle)\oplus \inder(T);\qquad \g(T)_{\bar1}:=  V\otimes T.
\end{equation}
The algebra $ \g(T)$ is called the \emph{standard enveloping algebra} related to the symplectic triple $T$. 
  Moreover, $ \g(T)$ is a simple Lie algebra if and only if so is $(T,[\, ,\, ,\,],(\, ,\,))$ (\cite[Theorem~2.9]{alb_triples}).
The reductive pairs under study will be
$( \g(T),\inder(T))$, for $T$ a simple 
 symplectic triple system over the reals, where the chosen complementary subspace is the orthogonal to $\mathfrak{h}= \inder(T)$ with respect to the Killing form, that is,
$$
\mathfrak{m}=\spf(V,\langle.,.\rangle)\oplus V\otimes T.
$$ 
 
All the above definitions and results work analogously for the complex field. Note that, 
if $T$ is a real     symplectic triple system, then $T^\CC=T\otimes_\RR\CC$ is a complex   symplectic triple system, since it satisfies  the properties $(1)$-...-$(4)$ in Def.~\ref{def1}. Moreover, if $T$ is simple, then $T^\CC$ is simple, since the bilinear form $(\, ,\,  )\colon T^\CC\times T^\CC\to\mathbb C$ is non-degenerate just when $(\, ,\,  )$ so is. This implies that the standard enveloping algebra  $ \g(T)$ is not only simple but central simple, i.e., $ \g(T)^\CC$ is a simple complex Lie algebra. This will allow us to use the computations in \cite{nues3Sas,hol}, since, for $3$-Sasakian homogeneous manifolds $M=G/H$, there exists a complex simple symplectic triple system $W$ such that $(\Lie(G)^\CC,\Lie(H)^\CC)\cong (\g(W),\inder(W))$.

 
 \subsection{Some key examples}
 
 The classification of the symplectic triple systems is well-known for the complex field,  
 but this is not the case for the real one. A very recent classification is achieved in \cite{conAlb}, where precisely a key tool is the fact that $T\otimes_\RR\CC$ is a complex simple symplectic triple system when 
   $T$ is a real   simple symplectic triple system.
   
    Here we focus on 
   some important examples of    simple  symplectic triple systems over $\RR$, directly adapted from the    examples over $\CC$, precisely those whose  standard enveloping algebra is split. There appears just one such  real   simple symplectic triple system $T$   related to each  complex simple symplectic triple system.    That family of examples with split standard enveloping algebra is exhibited here in order to have available some examples of homogeneous manifolds this theory can be applied to, although this is not a complete list according to  \cite{conAlb}. 
   
   We call  a   symplectic triple system $T$ of \emph{symplectic} (respectively of  orthogonal, special, exceptional) \emph{type} when the standard enveloping algebra is a symplectic   (respectively  orthogonal, special, exceptional) Lie algebra.

 \begin{example}\label{ex_symplectic}
Consider the non-degenerate alternating bilinear form $(\,,\,)\colon \RR^{2n}\times \RR^{2n}\to\RR$ given by
\begin{equation}\label{eq_forma}
\left(\begin{pmatrix}x_1\\x_2\end{pmatrix},\begin{pmatrix}y_1\\y_2\end{pmatrix}\right)=x_1\cdot y_2-x_2\cdot y_1,
\end{equation}
if $x_1,x_2,y_1,y_2\in\RR^n$, for $\cdot$ the usual scalar product in $\RR^n$.
Then $T=\RR^{2n}$ is a symplectic triple system with the triple product given by
$$
[x,y,z]:=(x,z)y+(y,z)x,
$$
if $x,y,z\in T$. It is easy to check that the algebra of inner derivations $\inder(T)$ is isomorphic to 
$$ \begin{array}{ll} 
\spf_{2n}(\RR)&=\{f\in\mathfrak{gl}_{2n}(\RR):(f(x),y)+(x,f(y))=0\,\forall x,y\in\RR^{2n}\}\vspace{3pt}\\
&=\left\{\tiny\begin{pmatrix} A&B\\C&-A^t\end{pmatrix}:B=B^t, C=C^t, A,B,C\in\mathfrak{gl}_{n}(\RR)\right\};
\end{array}
$$
and the standard enveloping algebra $\g(T)$ is isomorphic to the symplectic Lie algebra $\spf_{2n+2}(  \RR)$, 
where   the alternating bilinear form in $V\oplus \RR^{2n}\cong\RR^{2n+2}$ is defined as
$$
(u+x,v+y):=\langle u,v\rangle +(x,y),
$$
if $u,v\in V\cong\RR^{2}$, $x,y\in \RR^{2n}$.
This symplectic triple system $T=\RR^{2n}$ with  $(\g(T),\inder(T))\cong(\spf_{2n+2}(\RR),\spf_{2n}(\RR))$   has  \emph{symplectic type}. 
\end{example}

\begin{example}\label{ex_orthogonal}
Let   $b\colon \RR^{2n}\times \RR^{2n}\to\RR$ be the 
non-degenerate symmetric bilinear form $b(x,y)=x^t \tiny\begin{pmatrix}0&I_n\\I_n&0\end{pmatrix}y$.
 Then $T=\RR^{4n}$ is a symplectic triple system with alternating form
$$
\left(\tiny\begin{pmatrix}x\\x'\end{pmatrix},\tiny\begin{pmatrix}y\\y'\end{pmatrix}\right):=
\frac12 \big( b(x,y')-b(x',y)\big),
$$
and triple product given by
$$
\left[\tiny\begin{pmatrix}x\\x'\end{pmatrix},
\tiny\begin{pmatrix}y\\y'\end{pmatrix},
\tiny\begin{pmatrix}z\\z'\end{pmatrix}\right]:= 
\tiny\begin{pmatrix} -\frac12\big(b(x,y')+b(x',y)\big)z+b(x,y)z'+b(x,z)y'-b(y',z)x-b(x',z)y+b(y,z)x'\\\frac12\big(b(x,y')+b(x',y)\big)z'-b(x',y')z+b(x,z')y'-b(y',z')x-b(x',z')y+b(y,z')x' \end{pmatrix}
$$
for any   $x,x',y,y',z,z'\in \RR^{2n}$. Besides,    the algebra of inner derivations $\inder(T)$ is isomorphic to $  \slf_2(\RR)\oplus\sof_{n,n}(\RR)$
and the standard enveloping algebra $\g(T)$ is isomorphic to the orthogonal algebra $\sof_{n+2,n+2}(\RR)$.
This   symplectic triple system  $T=\RR^{4n}$ has \emph{orthogonal type}, with  $(\g(T),\inder(T))\cong(\sof_{n+2,n+2}(\RR),\slf_2(\RR)\oplus\sof_{n,n}(\RR))$. 
\end{example}

 \begin{example}\label{ex_special}
Take   $T=\RR^{2n}$ with the alternating form as in Eq.~\eqref{eq_forma}, but with the triple product defined by
$$
\left[\begin{pmatrix}x_1\\x_2\end{pmatrix},
\begin{pmatrix}y_1\\y_2\end{pmatrix},
\begin{pmatrix}z_1\\z_2\end{pmatrix}
\right]= 
\begin{pmatrix}-2(z_1\cdot y_2)x_1-2(z_1\cdot x_2)y_1-(x_1\cdot y_2+x_2\cdot y_1)z_1\\
2(z_2\cdot y_1)x_2+2(z_2\cdot x_1)y_2+(x_1\cdot y_2+x_2\cdot y_1)z_2\end{pmatrix}.
$$
 This is again a simple symplectic triple system with 
 $(\g(T),\inder(T))\cong(\slf_{n+2}(\RR), \gl_n(\RR))$,
of  \emph{special type}. 
\end{example}

\begin{example}\label{ex_exceptional2}
Let $C$ be a real (finite-dimensional) unital composition algebra, that is, endowed with a non-degenerate quadratic form $N\colon C\to\RR$ such that $N(xy)=N(x)N(y)$. Then, either $C$ is a real
division algebra (belonging to $ \{\RR,\CC,\HH, \OO\}$), or $C$ is isomorphic, with $N(x)=x\bar x$, to one of the next examples:
\begin{itemize}
\item $\RR^2$, for $\overline{(x,y)}=(y,x)$;
\item $\textrm{Mat}_{2\times2}(\RR)$ with $\overline{ \scriptsize\begin{pmatrix}\alpha&x\\y&\beta  \end{pmatrix}} =\scriptsize\begin{pmatrix}\beta&-x\\-y&\alpha\end{pmatrix}$;
\item The \emph{Zorn matrix algebra}, that is, $ \left\{ \scriptsize\begin{pmatrix}\alpha&x\\y&\beta\end{pmatrix}:\alpha,\beta\in\RR,\ x,y\in\RR^3  \right\}$ with product
$$
\scriptsize\begin{pmatrix}\alpha&x\\y&\beta\end{pmatrix}
\scriptsize\begin{pmatrix}\alpha'&x'\\y'&\beta'\end{pmatrix}=
\scriptsize\begin{pmatrix}\alpha\alpha'+x\cdot y'&\alpha x'+\beta'x+x\times x'\\\alpha' y+\beta y'-y\times y'&\beta
\beta'+y\cdot x'\end{pmatrix}
$$
and involution $\overline{ \scriptsize\begin{pmatrix}\alpha&x\\y&\beta  \end{pmatrix}} =\scriptsize\begin{pmatrix}\beta&-x\\-y&\alpha\end{pmatrix}$, where $\cdot$ and $\times$ denote the usual dot and cross products in $\RR^3$.  It is also called split-octonion algebra and denoted by $\mathbb O_s$.
\end{itemize}
These three algebras have non-trivial idempotents, so that they are not division  algebras.
Assume $C$ is either $\RR$ or one of these non-division composition algebras. Take $ H_3(C )=\{x=(x_{ij})\in \mathop{\textrm{Mat}}_{3\times3}(C ):x_{ji}=\overline{x_{ij}}\}$ the set of hermitian matrices with coefficients on $C$, which turns out to be a Jordan algebra with the symmetrized product $x\circ y=\frac12(xy+yx)$.
  Then the vector space 
$$
T_C=\left\{\begin{pmatrix}\alpha& a\\b&\beta\end{pmatrix}:\alpha,\beta\in\RR,a,b\in H_3(C )\right\}
$$
becomes a simple symplectic triple system of \emph{exceptional type} with the alternating map and triple product given as in \cite[Example~2.6]{hol}.
In this way   the pair $(\g(T),\inder(T))$ turns out to be isomorphic, respectively, to 
$$
(\mathfrak{f}_{4,4},  \spf_6(\RR) ),\quad
(\mathfrak{e}_{6,6}, \slf_6(\RR) ),\quad
(\mathfrak{e}_{7,7}, \mathfrak{so}_{6,6}(\RR) ),\quad
(\mathfrak{e}_{8,8},\mathfrak{e}_{7,7}).
$$

The following symplectic triple system has exceptional type too.
Take $V_n=\RR_n[X,Y]$ the linear space of the degree $n$ homogeneous polynomials in two variables $X,Y$.
For any $f\in V_n$, $g\in V_m$, the transvection $(f,g)_q\in V_{m+n-2q}$ is defined by $(f,g)_q=0$ if $q>\textrm{min}(n,m)$, and
$$
(f,g)_q=\frac{(n-q)!}{n!}\frac{(m-q)!}{m!}\sum_{i=0}^q(-1)^i\binom{q}{i}\frac{\partial^qf}{\partial x^{q-i}\partial y^{i}}
\frac{\partial^qg}{\partial x^{i}\partial y^{q-i}}
$$
otherwise. Then $T=V_3$, with the alternating form given by $(f,g):=(f,g)_3$ and the triple product given by
$$
[f,g,h]:=6((f,g)_2,h)_1,
$$
is a symplectic triple system with related reductive pair $(\g(T),\inder(T))=(\mathfrak{g}_{2,2},\slf_2(\RR))$.
In this case, the corresponding homogeneous manifold has been considered in \cite[\S5.4]{esphomogeneosdeG2}, but    the metric considered there, induced by the Killing form, is not an Einstein metric.
\end{example}

\section{The geometric structure }\label{se_geo}

 If $G$ is any  Lie group with related Lie algebra $ \g(T)$, it is well-known that there is a unique connected subgroup  $H$ of $G$ whose  related Lie algebra is $\mathfrak h=\inder(T)$. In case $H$ is closed, or equivalently $H$ has the  topology induced from $G$, 
 then $M=G/H$ is a   homogeneous space. This will always be the case. 
 It is difficult to have general arguments for checking that a subgroup is closed only with the {\it infinitesimal}  information about its Lie algebra. But, in our case, $G$ and $H$ can be taken not only as  matrix Lie groups, but also as  algebraic groups.

 Accordingly with our previous list of (split) examples, we get a (not complete) list of homogeneous manifolds this theory can be applied to. For instance, focusing on the classical examples,   
 \begin{equation}\label{eq_manifolds}
 \rm{Sp}_{2n+2}(\RR)/\rm{Sp}_{2n}(\RR);\qquad 
 {\rm{SO}}^+_{n+2,n+2}(\RR)/{\rm{SO}}^+_{n,n}(\RR)\times  {\rm{SL}}_{2}(\RR);\qquad 
 {\rm{SL}}_{m+2}(\RR)/ {\rm{GL}}_m^+(\RR).
 \end{equation}
 \begin{itemize}
 \item Symplectic case: Take the vector  space $\mathcal{U}=\mathbb{R}^{2n}$ endowed with a non-degenerate alternating form $\omega\colon \mathcal{U}\times\mathcal{U}\to\RR$, and $V=\RR^2$ with the symplectic form $\langle.,.\rangle$ as in Section~\ref{se_sts}. Consider  the group $G= \rm{Sp}(\mathcal{U}\oplus V,\omega\perp\langle.,.\rangle )\cong \rm{Sp}_{2n+2}(\RR)$. 
The (obviously closed) subgroup $H=\{\sigma \in G:\sigma \vert_V=\id\}$ can be naturally  identified with $\rm{Sp}(\mathcal{U},\omega )\cong {\rm{Sp}}_{2n}(\RR)$, by means of the map $\sigma \mapsto \sigma \vert_\mathcal{U} $.\smallskip

 \item Orthogonal case: Take the vector space $\mathcal{U}=\mathbb{R}^{2n}$ endowed with a non-degenerate bilinear symmetric form $b\colon \mathcal{U}\times\mathcal{U}\to\RR$ of neutral signature. Take $\HH_s={\rm{Mat}}_{2\times2}(\RR)$ as in Example~\ref{ex_exceptional2}, with the symmetric form $N$ -of neutral signature too- whose related quadratic form is given by the determinant. Consider
 the group $G= {\rm{SO}}(\mathcal{U}\oplus\HH_s,b\perp N)\cong {\rm{SO}}_{n+2,n+2}(\RR)$. 
 The closed  subgroup $ \{\sigma \in G:\sigma (\mathcal{U})\subset  \mathcal{U},\sigma (\HH_s)\subset  \HH_s\}$ can be naturally  identified with ${\rm{S}}({\rm{O}}_{n,n}(\RR)\times {\rm{O}}_{2,2}(\RR))$  by means of the map 
 $\sigma \mapsto (\sigma \vert_\mathcal{U},\sigma \vert_{\HH_s}) $.
 Taking in mind that $  {\rm{SL}}_{2}(\RR)\times {\rm{SL}}_{2}(\RR)\cong{\rm{SO}}_{2,2}(\RR)$,
 $(a,b)\mapsto L_aR_b\colon \HH_s\to\HH_s$  (left and right  multiplication operators),
  the   subgroup we are interested in is 
 $$
 H= \{\sigma \in G:\sigma (\mathcal{U})\subset  \mathcal{U},\det(\sigma \vert_\mathcal{U})=1, \sigma (\HH_s)\subset  \HH_s,\sigma \tau_a=\tau_a\sigma \ \forall a\in  {\rm{SL}}_{2}(\RR)\},
 $$
  where 
 $\tau_a\in G$ is defined by $\tau_a\vert_\mathcal{U}=\id$ and $\tau_a\vert_{\HH_s}=L_a$.
 The group $H$ is closed since it is defined as the zero set of a smooth function (in fact, a  polynomial function), noting that commuting with $\tau_a$ for any $a\in  {\rm{SL}}_{2}(\RR)$ is equivalent to commute with 
 $\{\tau_{a_i}\}_{i=1,2}$, for 
  $$ 
  a_1=\tiny\begin{pmatrix}1&1\\0&1\end{pmatrix},\qquad a_2=\tiny\begin{pmatrix}1&0\\1&1\end{pmatrix}.
  $$ 
It can be easily checked that $H\cong{\rm{SO}}_{n,n}(\RR)\times  {\rm{SL}}_{2}(\RR)$. 
Note that   $G= {\rm{SO}}_{n+2,n+2}(\RR)$ is not connected, so, in order  to agree with Eq.~\eqref{eq_manifolds}, take the identity component $G_0= {\rm{SO}}^+_{n+2,n+2}(\RR)$. In fact, 
  we can apply the theory both to $G/H$ and to $G_0/H_0$.
 \smallskip
 
 \item Special case: Take a vector subspace $\mathcal{U}=\mathbb{R}^{m}$, and, as before $V=\RR^2$.
 Consider the group $G=\{\sigma \in{\rm{GL}}(\mathcal{U}\oplus V):\det \sigma =1\}$, which is isomorphic to ${\rm{SL}}_{m+2}(\RR)$. The closed subgroup,   
 $$
 H=\{\sigma \in G: \sigma (\mathcal{U})\subset \mathcal{U},\, \det \sigma \vert_\mathcal{U}>0,\,
 \sigma \vert_{V}=\frac1{\sqrt{\det \sigma \vert_\mathcal{U}} }\id_V\}
 $$
  is isomorphic to 
 ${\rm{GL}}^+(\mathcal{U})\cong {\rm{GL}}_m^+(\RR)$, again  by means of the map $\sigma \mapsto \sigma \vert_\mathcal{U} $.
 \end{itemize}
\medskip

\subsection{Background }
Thus our situation is, from now on, a  Lie group $G$ with related Lie algebra $\g= \g(T)$, and a closed connected subgroup  $H$ of $G$ whose  related Lie algebra is $\mathfrak h= \inder(T)$, for some $T$     simple symplectic triple system over $\RR$. In particular, $M=G/H$ is a reductive homogeneous space.

 The differential map $\pi_{*}$ of the canonical projection $\pi\colon G\to M$ permits us to identify the tangent space $T_{\pi(e)}M$ with $\mathfrak{m}=\spf(V,\langle.,.\rangle)\oplus V\otimes T\le\g(T)\cong T_eG$,   by the linear isomorphism
$(\pi_{*})_e\vert_{\mathfrak{m}}\colon\mathfrak{m}\to  T_oM$, for $o=\pi(e)=eH\in M$. Thus we can work in an algebraic setting.
Let us denote by $[\ ,\ ]_{\mathfrak{h}}$ and   $[\ ,\ ]_{\mathfrak{m}}$   the composition of the bracket
$ [\,,\,]\colon \mathfrak{m}\times \mathfrak{m}\to \mathfrak{g}$ with  the projections 
$\pi_{\mathfrak{h}},\pi_{\mathfrak{m}}\colon \g\to\g$  of $\mathfrak{g}= \mathfrak {h}\oplus \mathfrak{m}$ onto each summand, respectively. Recall   the $\mathbb Z_2$-grading on $\g={\g_{\bar0} }\oplus {\g_{\bar1} }$   considered in Eq.~\eqref{eq_grad}.

Take the  $G$-invariant metric $g$ on $M$  determined by $ g_o\colon T_oM\times T_oM\to\RR$, which, under the identification $(\pi_{*})_e\vert_{\mathfrak{m}}$, is defined by 
\begin{equation}\label{eq_nuestrag}
g\vert_{\spf(V)}=- \frac1{4(n+2)} \kappa,\quad g\vert_{\g_{\bar1} }=-\frac1{8(n+2)}\kappa,\quad g\vert_{\spf(V)\times\g_{\bar1} }=0,
\end{equation}
for $\kappa$ the Killing form of $\g$. 
Observe that the choice of this metric is inspired in   the Einstein metric in any   3-Sasakian homogeneous manifold \cite[Theorem~4.3ii)]{nues3Sas}.
Our knowledge of the Killing form shows, as in \cite[Eq.~(20)]{hol},   the behavior of $g\vert_{\g_{\bar1} }$, that is, 
\begin{equation}\label{eq_godd}
g(a\otimes x,b\otimes y)=\frac12\langle a,b\rangle(x,y),
\end{equation}
for any $a,b\in V$, $x,y\in T$.
\smallskip

In order to work with  the Levi-Civita connection, the only metric affine connection with zero torsion, 
consider  the related map $\alpha^g\colon \mm\times\mm\to\mm$, which is the only $\hh$-invariant map  satisfying
$$
\alpha^{g} (X,Y)-\alpha^{g} (Y,X)=[X,Y]_\mathfrak{m}
$$
and 
$$
g(\alpha^{g} (X,Y),Z)+g(Y,\alpha^{g} (X,Z))=0
$$
for any $X,Y,Z\in\mm$. 
With similar arguments as in  \cite[Theorem~4.3]{nues3Sas}, such map is proved to be
\begin{equation}\label{eq_alfadeLevi}
 \alpha^g(X,Y)=\left\{\begin{array}{ll}
 0&\text{if $\,X\in\spf(V,\langle.,.\rangle)$ and $\,Y\in\g_{\bar1}$},\\
 \frac12[X,Y]_\mm&\text{if either $\,X,Y\in\spf(V,\langle.,.\rangle)$ or $\,X,Y\in\g_{\bar1}$},\\
{ [X,Y]}_\mm&\text{if $\,X\in\g_{\bar1}$ and $\,Y\in\spf(V,\langle.,.\rangle)$}.\\
 \end{array}
 \right.
\end{equation}

Now, the curvature tensor  can   be expressed in terms of this related map   as follows:
$$
 R (X,Y)Z=  
\alpha^{g} (X,\alpha^{g}(Y, Z))-\alpha^{g}(Y,\alpha^{g} (X, Z)) 
  -\alpha^{g}( [X , Y]_{\mathfrak{m}}, Z)-[[X , Y]_{\mathfrak{h}} , Z],
$$
for any $X, Y, Z\in \mathfrak{m}$.
More specifically, by \cite[Proposition~4.1]{hol},
\begin{equation}\label{eq_curv}
\begin{array}{l}
R(\xi,\xi') (\xi''+a\otimes x)=-\frac14[[\xi,\xi'],\xi''],\vspace{3pt}\\
R(a\otimes x, \xi)(\xi'+b\otimes y)=
-\frac12(x,y)\langle a,b\rangle \xi+g(\xi,\xi')a\otimes x,\vspace{4pt}\\
R(a\otimes x, b\otimes y)(\xi+c\otimes z)=\frac{\gamma_{a,c}(b)\otimes (x,z)y-\gamma_{b,c}(a)\otimes (y,z)x}2-\langle a,b \rangle c\otimes [x,y,z],
\end{array}
\end{equation}
for any $\xi,\xi',\xi''\in  \spf(V,\langle.,.\rangle)$, $a,b,c\in V$, $x,y,z\in T$, and recall that trilinearity allows us to know completely $R\colon\mm\times\mm\times\mm\to\mm$. 
This algebraic expression will be used next to prove that indeed $M$ is an Einstein manifold, since it is not difficult to compute the trace of the related map $R(-,X)Y$ in a unified way independently of the choice of the  symplectic triple system $T$.


\subsection{ Ricci tensor }

Recall that the Ricci tensor   is the symmetric $(0,2)$-tensor defined by $\Ric(X,Y)=\tr R(-,X)Y$. We compute this trace directly, by using the properties of the symplectic triple system. To that aim, we introduce a tensor $Q$ which measures the difference between the curvature tensor and certain metric operators. This will make easier the computations for the trace.

\begin{lemma}\label{le_defQ}
The curvature tensor equals
$$
R(X,Y)=g(Y,-)X-g(X,-)Y+Q(X,Y),
$$
where $Q(X,Y)\colon\mm\to\mm$, $Z\mapsto Q(X,Y,Z)$ ($X,Y,Z\in\mm$)
vanishes  if $X$, $Y$ or $Z$ belongs to $\spf(V)$, and is defined by linear extension on $V\otimes T$ by
\begin{equation*}\label{eq_nuevotensorQ}
Q(a\otimes x, b\otimes y,c\otimes z)=\langle a,b\rangle c\otimes ((x,z)y+(y,z)x-[x,y,z])
\end{equation*}
for any $a,b,c\in V$, $x,y,z\in T$.
\end{lemma}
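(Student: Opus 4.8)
The plan is to check the asserted equality $R(X,Y)Z = g(Y,Z)X - g(X,Z)Y + Q(X,Y,Z)$ by a direct computation, using trilinearity of all the objects involved to reduce to the case where each of $X,Y,Z$ is homogeneous for the splitting $\mm=\spf(V)\oplus(V\otimes T)$, and feeding in the explicit curvature formulas \eqref{eq_curv} together with the metric \eqref{eq_nuestrag} and \eqref{eq_godd}. Before splitting into cases I would observe that both sides are alternating in $(X,Y)$: this is clear for the curvature and for $g(Y,Z)X-g(X,Z)Y$, while for $Q$ it follows from $\langle b,a\rangle=-\langle a,b\rangle$ and $[y,x,z]=[x,y,z]$ (Eq.~\eqref{eq_uno}), so that $Q(b\otimes y,a\otimes x,c\otimes z)=-Q(a\otimes x,b\otimes y,c\otimes z)$. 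This symmetry roughly halves the work.

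The cases where some argument lies in $\spf(V)$ are the routine ones, since then $Q$ vanishes by definition and only $R(X,Y)Z=g(Y,Z)X-g(X,Z)Y$ has to be verified. When exactly one of $X,Y$ is in $\spf(V)$, I would read off $R(a\otimes x,\xi)(\xi'+b\otimes y)$ from the second line of \eqref{eq_curv} and compare with $g(\xi,-)\,a\otimes x-g(a\otimes x,-)\,\xi$; using that $\spf(V)$ and $V\otimes T$ are $g$-orthogonal and that $g$ acts on $V\otimes T$ by \eqref{eq_godd}, the two expressions agree termwise. The purely $\spf(V)$-valued subcase reduces to the $\slf_2$ identity $[[\xi,\xi'],\xi'']=\frac12\bigl(\kappa_0(\xi',\xi'')\xi-\kappa_0(\xi,\xi'')\xi'\bigr)$, where $\kappa_0$ is the Killing form of $\spf(V)\cong\slf_2$; combining this with $\kappa\vert_{\spf(V)}=\frac14(\dim T+4)\,\kappa_0$ (obtained by tracing $\ad\xi\,\ad\xi'$ over the three summands of $\g$, the $\inder(T)$-summand contributing nothing) shows that the coefficient $-\frac14$ in \eqref{eq_curv} matches $g(\xi',\xi'')\xi-g(\xi,\xi'')\xi'$ precisely when $n=\frac12\dim_\RR T$, which is the normalization built into \eqref{eq_nuestrag}. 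Geometrically this is the statement that the $\spf(V)$-block has constant curvature.

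The substantive case is $X=a\otimes x$, $Y=b\otimes y$, $Z=c\otimes z$ all in $V\otimes T$. Here I would expand the third line of \eqref{eq_curv}, inserting $\gamma_{a,c}(b)=\langle a,b\rangle c+\langle c,b\rangle a$ and $\gamma_{b,c}(a)=\langle b,a\rangle c+\langle c,a\rangle b$, and compare with $\frac12\langle b,c\rangle(y,z)\,a\otimes x-\frac12\langle a,c\rangle(x,z)\,b\otimes y+Q(X,Y,Z)$. The term $-\langle a,b\rangle\,c\otimes[x,y,z]$ of the curvature cancels the triple-product term of $Q$ at once, so no deep property of the triple product is needed. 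The remaining six monomials $a\otimes x,\,a\otimes y,\,b\otimes x,\,b\otimes y,\,c\otimes x,\,c\otimes y$ do not cancel one by one; instead, after collecting separately the terms ending in $\otimes x$ and in $\otimes y$, they assemble into multiples of the vector $\langle a,b\rangle c+\langle b,c\rangle a+\langle c,a\rangle b$, which vanishes identically because $V$ is two-dimensional. This Pl\"ucker-type relation in $V$ is exactly what leaves the clean remainder $Q$.

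I expect the only real obstacle to be bookkeeping in this last case: one must resist trying to cancel the leftover tensor directions individually and instead \emph{recognize} the two-dimensional identity that makes them collapse. A secondary point deserving care is the scalar normalization, namely verifying that the constants $-\frac1{4(n+2)}$ and $-\frac1{8(n+2)}$ of \eqref{eq_nuestrag} are precisely those for which the $\spf(V)$-block acquires constant curvature; this is the single place where the particular choice of invariant metric, rather than mere naturality of $\alpha^g$, is used.
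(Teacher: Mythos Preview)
Your proposal is correct and follows the same route the paper takes: the paper's proof is nothing more than a pointer to the curvature formulas \eqref{eq_curv} (and to \cite{hol}), declaring the computation ``straightforward'', and your case-by-case verification is exactly that straightforward computation spelled out. Your identification of the two-dimensional identity $\langle a,b\rangle c+\langle b,c\rangle a+\langle c,a\rangle b=0$ as the mechanism in the $V\otimes T$ case, and of the normalization $n=\tfrac12\dim_\RR T$ in the $\spf(V)$ block, are both on target.
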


The proof is straightforward following \cite[Eqs.~(18) and (19)]{hol} and the lines in \cite[Remark~4.3]{hol}. These computations are obtained as an immediate consequence of Eq.~\eqref{eq_curv}, where the algebraic expression of the curvature operators is described.

The key for checking that $g$ is always an Einstein metric is that the introduced tensor $Q$ will have zero trace. Observe that this tensor measures how far is the manifold from being of constant curvature, so that, for instance, $Q=0$ if (and only if) the symplectic triple system is of symplectic type (Example~\ref{ex_symplectic}).

\begin{proposition}\label{pr_lacuenta}
We have
\begin{itemize}
\item[a)] $\tr g(X,-)Y=g(X,Y)$ for any $X,Y\in\mm$; and
\item[b)] $\tr Q(X,-,X)=0$ for all $X\in V\otimes T$.
\end{itemize}
\end{proposition}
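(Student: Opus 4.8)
The plan is to reduce each statement to the trace of a rank-one (or nearly rank-one) operator, the decisive input being that inner derivations are traceless because they are skew-adjoint for the alternating form.

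For part (a) this is immediate: the operator $g(X,-)Y\colon\mm\to\mm$ sends $Z\mapsto g(X,Z)\,Y$, i.e. it is the composition of the linear functional $g(X,-)$ with the vector $Y$. The trace of any operator of the form $Z\mapsto\phi(Z)\,Y$ equals $\phi(Y)$, whence $\tr g(X,-)Y=g(X,Y)$. Only the bilinearity of $g$ is used here, nothing specific to the triple system.

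For part (b) I would first note that $Q(a\otimes x,-,c\otimes z)$ annihilates $\spf(V)$ and has image in $V\otimes T$, so its trace over $\mm$ coincides with its trace over $V\otimes T$. I would then factor it as $\iota_c\circ L\circ P_a$, where $P_a\colon V\otimes T\to T$, $b\otimes y\mapsto\langle a,b\rangle y$, next $L\colon T\to T$, $L(w)=(x,z)w+(w,z)x-[x,w,z]$, and finally $\iota_c\colon T\to V\otimes T$, $w\mapsto c\otimes w$. Since $P_a\iota_c=\langle a,c\rangle\,\id_T$, cyclicity of the trace gives $\tr(\iota_c L P_a)=\tr(L P_a\iota_c)=\langle a,c\rangle\,\tr L$. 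Expanding a general $X=\sum_k a_k\otimes x_k$ then yields $\tr Q(X,-,X)=\sum_{k,l}\langle a_k,a_l\rangle\,\tr L_{k,l}$, with $L_{k,l}(w)=(x_k,x_l)w+(w,x_l)x_k-[x_k,w,x_l]$.

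The crux is therefore to show $\tr L=0$ for all $x,z\in T$. I would split $L$ into its three summands: the term $(x,z)\,\id$ contributes $(x,z)\dim T$, and the rank-one term $w\mapsto(w,z)x$ contributes $(x,z)$. For the last summand I would rewrite the map $w\mapsto[x,w,z]$ by means of the symmetry axiom \eqref{eq_dos} as $[x,w,z]=d_{x,z}(w)+(x,z)w-(x,w)z+2(w,z)x$; here $d_{x,z}$ is an inner derivation, which by \eqref{eq_cuatro} is skew-adjoint for $(\,,\,)$, hence lies in the symplectic Lie algebra and is traceless. The surviving rank-one pieces give $\tr[w\mapsto[x,w,z]]=(x,z)(\dim T+1)$, so the three contributions cancel and $\tr L=0$; every summand above then vanishes. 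I expect the main obstacle to lie exactly in this cancellation: it succeeds only because $d_{x,z}$ is forced to be traceless by \eqref{eq_cuatro} and because the numerical coefficients in \eqref{eq_dos} are precisely those that annihilate the $\dim T$-contributions. This is where the defining identities of a symplectic triple system enter essentially, and it explains why the Einstein property holds uniformly. (The diagonal terms $k=l$ already vanish separately, as both $(x_k,x_k)=0$ and $\langle a_k,a_k\rangle=0$, so the real content sits in the off-diagonal terms handled by $\tr L=0$.)
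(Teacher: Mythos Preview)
Your argument is correct and is in fact cleaner than the one given in the paper. Both proofs ultimately rely on the same two axioms, \eqref{eq_dos} and \eqref{eq_cuatro}, and on the simplicity of $T$ (so that $(\,,\,)$ is non-degenerate, whence any $d_{x,z}\in\spf(T,(\,,\,))$ is traceless). The difference lies in how the trace over $V\otimes T$ is handled.

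The paper proceeds by brute force: it chooses a symplectic basis $\{x_i,y_i\}$ of $T$, builds an explicit $g$-orthogonal basis $\{E_j\}_{j=1}^{4m}$ of $V\otimes T$, writes $X=e_1\otimes z+e_2\otimes t$, and computes each $g(Q(X,E_j,X),E_j)$ by hand. After summing the four blocks the surviving expression is simplified via \eqref{eq_dos} and \eqref{eq_cuatro} until it vanishes. Your approach bypasses the basis choice entirely: the factorisation $Q(a\otimes x,-,c\otimes z)=\iota_c\, L\, P_a$ together with $\tr(\iota_c L P_a)=\tr(L P_a\iota_c)=\langle a,c\rangle\,\tr L$ reduces the question on $V\otimes T$ to computing a single trace on $T$, and there the rewriting $[x,w,z]=d_{x,z}(w)+(x,z)w-(x,w)z+2(w,z)x$ makes the cancellation $(x,z)\dim T+(x,z)-(x,z)(\dim T+1)=0$ completely transparent.

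What each approach buys: yours isolates the conceptual reason for the vanishing (tracelessness of inner derivations plus the exact numerical coefficients in \eqref{eq_dos}), is coordinate-free, and generalises immediately to any base field. The paper's explicit computation, while longer, produces along the way the individual summands $g(Q(X,E_j,X),E_j)$, which could be useful if one later wanted finer curvature information beyond the Ricci trace.
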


\begin{proof}  
Recall that if $(W,g)$  denotes a real vector space $W$ with a non-degenerate symmetric bilinear form $g\colon W\times W\to\RR$,   then the trace of any linear map  $f\colon W\to W$ can be computed from any orthogonal basis $\{E_i\}$ of $W$
as
$\tr(f)=\sum_ig(f(E_i),E_i)/g(E_i,E_i)$. 

Item a) is very well-known: we complete $0\ne\{Y\}$ until an orthogonal basis. Then the matrix of the map $f=g(X,-)Y$ relative to such a basis has only the first row different from zero, so its the trace is $g(X,Y)$ since $f(Y)=g(X,Y)Y$. 

For item b), take $\{x_i,y_i\}_{i=1,\dots,m}$ a basis of $T$ such that the symplectic form is given by
$$
(x_i,y_j)=\delta_{ij};\quad (x_i,x_j)=(y_i,y_j)=0,
$$ 
what is usually called a symplectic basis of $T$. (Here it is essential the simplicity of $T$.)
Recall   our choice of $e_1,e_2\in V$ with $\langle e_1,e_2\rangle=1$ used throughout this work. This provides the following orthogonal basis of $(V\otimes T,g\vert_{V\otimes T})$,
$$\begin{array}{ll}
E_{i}=e_1\otimes x_i+e_2\otimes y_i,\qquad&
E_{i+m}=e_1\otimes y_i-e_2\otimes x_i,\\
E_{i+2m}=e_1\otimes y_i+e_2\otimes x_i,\qquad&
E_{i+3m}=e_1\otimes x_i-e_2\otimes y_i;
\end{array}   
$$
for $i\in\{1,\dots,m\}$. It satisfies $g(E_i,E_i)=1$ if $i\in\{1,\dots,2m\}$ and $g(E_i,E_i)=-1$ if $i\in\{2m+1,\dots,4m\}$, taking in mind Eq.~\eqref{eq_godd}.
Take $X$ an arbitrary element in $V\otimes T$, which can be written as $X=e_1\otimes z+e_2\otimes t$ for some $z,t\in T$. Thus the trace of the map $Q(X,-,X)\colon\mm\to\mm$
is
$\tr Q(X,-,X)=S_0+S_1-S_2-S_3$, for $S_k=\sum_{i=1}^{m}g(Q(X,E_{i+km},X),E_{i+km})$, since 
$Q(X,\spf(V),X)=0$.
For convenience, denote by 
\begin{equation}\label{eq_llave}
\{x,y,z\}:=(x,z)y+(y,z)x-[x,y,z]
\end{equation}
 the trilinear map defined on $T$. Now, we compute, for $1\le i\le m$,
$$\begin{array}{l}
Q(X,E_i,X)=e_1\otimes\big(\{z,y_i,z\}-\{t,x_i,z\}\big)+e_2\otimes\big(\{z,y_i,t\}-\{t,x_i,t\}\big),\\
Q(X,E_{i+m},X)=-e_1\otimes\big(\{z,x_i,z\}+\{t,y_i,z\}\big)-e_2\otimes\big(\{z,x_i,t\}+\{t,y_i,t\}\big),\\
Q(X,E_{i+2m},X)=e_1\otimes\big(\{z,x_i,z\}-\{t,y_i,z\}\big)+e_2\otimes\big(\{z,x_i,t\}-\{t,x_i,t\}\big),\\
Q(X,E_{i+3m},X)=-e_1\otimes\big(\{z,y_i,z\}+\{t,x_i,z\}\big)-e_2\otimes\big(\{z,y_i,t\}+\{t,x_i,t\}\big).
\end{array}
$$
By following the computation, and taking into account Eq.~\eqref{eq_godd},  we have
$$
\begin{array}{ll}
g(Q(X,E_i,X),E_{i})&=\frac12\big( (\{z,y_i,z\}-\{t,x_i,z\},y_i)-(\{z,y_i,t\}-\{t,x_i,t\},x_i)  \big)\\
&=\frac12\Big(-(z,y_i)^2-(x_i,t)^2-2(t,z)(x_i,y_i)+2(x_i,z)(y_i,t)- \\
&\qquad -([z,y_i,z],y_i)+([t,x_i,z],y_i)+([z,y_i,t],x_i)-([t,x_i,t],x_i)\Big),\\
\end{array}
$$
and, analogously,
$$
\begin{array}{ll}
g(Q(X,E_{i+m},X),E_{i+m})&=\frac12\Big(-(z,x_i)^2-(y_i,t)^2-2(t,z)(x_i,y_i)-2(y_i,z)(x_i,t)- \\
&\quad -([z,x_i,z],x_i)-([t,y_i,z],x_i)-([z,x_i,t],y_i)-([t,y_i,t],y_i)\Big),\\
g(Q(X,E_{i+2m},X),E_{i+2m})&=\frac12\Big(-(z,x_i)^2-(y_i,t)^2+2(t,z)(x_i,y_i)+2(y_i,z)(x_i,t)- \\
&\quad -([z,x_i,z],x_i)+([t,y_i,z],x_i)+([z,x_i,t],y_i)-([t,y_i,t],y_i)\Big),\\
g(Q(X,E_{i+3m},X),E_{i+3m})&=\frac12\Big(-(z,y_i)^2-(x_i,t)^2+2(t,z)(x_i,y_i)-2(x_i,z)(y_i,t)- \\
&\quad -([z,y_i,z],y_i)-([t,x_i,z],y_i)-([z,y_i,t],x_i)-([t,x_i,t],x_i)\Big).\\
\end{array}
$$
If we sum, for $i=1,\dots,m$, we get that the trace   $\tr Q(X,-,X)=S_0+S_1-S_2-S_3$ can be obtained as
\begin{equation}\label{eq_traza}
\begin{array}{rl}
\tr Q(X,-,X)=\sum_{i=1}^{m}&\big( ([t,x_i,z],y_i)-([t,y_i,z],x_i) +([z,y_i,t],x_i)-([z,x_i,t],y_i)\\
&  -4(t,z)(x_i,y_i)+2(x_i,z)(y_i,t)-2(y_i,z)(x_i,t)\big) .
\end{array}
\end{equation}
Now we use the properties defining a symplectic triple system in Def.~\ref{def1} to get     
\begin{equation}\label{eq_paratraza}
\begin{array}{ll}
([t,x_i,z],y_i)-([t,y_i,z],x_i)&\stackrel{\eqref{eq_cuatro}}=([t,x_i,y_i],z)-([t,y_i,x_i],z)\\
&=([t,x_i,y_i]-[t,y_i,x_i],z) \\
&\stackrel{\eqref{eq_dos}}=
 ((t,y_i)x_i-(t,x_i)y_i+2(x_i,y_i)t,z )\\
&=-(y_i,t)(x_i,z)+(x_i,t)(y_i,z)+2(x_i,y_i)(t,z).
\end{array}
\end{equation}
We replace in the above equation $x_i$ with $y_i$ and $z$ with $t$, to get
\begin{equation}\label{eq_paratraza2}
 ([z,y_i,t],x_i)-([z,x_i,t],y_i)=-(x_i,z)(y_i,t)+(y_i,z)(x_i,t)+2(y_i,x_i)(z,t).
\end{equation}
Finally, we substitute Eqs.~\eqref{eq_paratraza} and \eqref{eq_paratraza2} in Eq.~\eqref{eq_traza}, which directly gives
$
\tr Q(X,-,X)=\sum_{i=1}^{m}0=0,
$
which ends the proof.
\end{proof} 

 Now we get, as  a corollary, our main result.
\begin{corollary}\label{main}
Take $(M,g)$ the semi-Riemannian space given by 
\begin{itemize}
\item $M=G/H$ is a homogeneous space such that there is a real simple symplectic triple system $T$ with $(\Lie(G),\Lie(H))= (\g(T),\inder(T))$;
\item $g$ is the metric defined in Eq.~\eqref{eq_nuestrag}.
\end{itemize} Then $M$
 is an Einstein   manifold, since, for any $X,Y\in\mm$,
$$
\Ric(X,Y)=(\dim M-1)\,g(X,Y).
$$
Consequently, its scalar curvature is always positive, given by
$$
s^g=(\dim M-1)\dim M.
$$
\end{corollary}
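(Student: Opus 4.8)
The plan is to derive \texttt{Corollary~\ref{main}} as a direct consequence of the decomposition of the curvature tensor in \texttt{Lemma~\ref{le_defQ}} together with the two trace computations in \texttt{Proposition~\ref{pr_lacuenta}}. Since $\mathrm{Ric}(X,Y)=\tr R(-,X)Y$ by definition, and $R$ is bilinear and the Ricci tensor is symmetric, it suffices to compute $\mathrm{Ric}(X,X)$ for an arbitrary $X\in\mm$ and then polarize. First I would insert the expression $R(X,Y)=g(Y,-)X-g(X,-)Y+Q(X,Y)$ into $\mathrm{Ric}(X,X)=\tr R(-,X)X$, being careful about the argument order: writing $R(-,X)X$ as $Z\mapsto R(Z,X)X$, the lemma gives $R(Z,X)X=g(X,-)Z\big|_{\text{eval}}$ type terms, so I would reorganize to get $R(-,X)X = g(X,X)\,\id_{\mm}-g(X,-)X + Q(-,X)X$ up to the sign conventions, and take the trace of each of the three summands separately.

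The three pieces are then handled as follows. The first piece contributes $g(X,X)\cdot\tr(\id_{\mm})=(\dim\mm)\,g(X,X)=(\dim M)\,g(X,X)$, using $\dim M=\dim\mm$. The second piece is exactly $\tr g(X,-)X=g(X,X)$ by \texttt{Proposition~\ref{pr_lacuenta}(a)}, contributing $-g(X,X)$. The third piece is $\tr Q(-,X)X$; by the symmetry/antisymmetry structure of $Q$ inherited from the curvature tensor and the fact (from \texttt{Lemma~\ref{le_defQ}}) that $Q$ vanishes whenever one slot lies in $\spf(V)$, this reduces to $\tr Q(X,-,X)$ for $X\in V\otimes T$, which is $0$ by \texttt{Proposition~\ref{pr_lacuenta}(b)}. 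Combining, $\mathrm{Ric}(X,X)=(\dim M-1)\,g(X,X)$, and polarization yields $\mathrm{Ric}(X,Y)=(\dim M-1)\,g(X,Y)$ for all $X,Y\in\mm$, which is precisely the Einstein condition with Einstein constant $\dim M-1$.

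For the scalar curvature I would simply take the trace of the Ricci endomorphism: $s^g=\tr_g\mathrm{Ric}=(\dim M-1)\,\tr_g g=(\dim M-1)\,\dim M$, since the $g$-trace of $g$ itself equals $\dim M$ (the trace of the identity). Positivity is then immediate because $\dim M\ge 2$ in all the examples, so $(\dim M-1)\dim M>0$.

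The main obstacle I anticipate is purely bookkeeping rather than conceptual: getting the placement of the arguments and the signs correct when converting $\mathrm{Ric}(X,Y)=\tr R(-,X)Y$ into the three traceable pieces, since $Q$ and the metric terms are not symmetric in their first two slots and one must match the convention $Z\mapsto R(Z,X)Y$ against the lemma's $R(X,Y)=g(Y,-)X-g(X,-)Y+Q(X,Y)$. In particular I must verify that the off-diagonal $\spf(V)\times(V\otimes T)$ contributions to the trace genuinely vanish — which they do because \texttt{Lemma~\ref{le_defQ}} forces $Q$ to kill any $\spf(V)$ slot, and the metric terms on the $\spf(V)$ part contribute only the clean $g(X,X)\dim M$ and $-g(X,X)$ counted above — so that no cross terms between the two $\mm$-summands survive. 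Once the argument slots are pinned down, the reduction to \texttt{Proposition~\ref{pr_lacuenta}} is immediate and the Einstein constant $\dim M-1$ drops out with no further triple-system computation needed.
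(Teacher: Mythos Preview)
Your proposal is correct and follows essentially the same route as the paper: decompose $R$ via Lemma~\ref{le_defQ}, take traces of the three pieces using Proposition~\ref{pr_lacuenta}, and note that the $Q$-contribution vanishes because $Q$ kills any $\spf(V)$ slot and Proposition~\ref{pr_lacuenta}(b) handles the $V\otimes T$ part. The only cosmetic difference is that the paper computes $\Ric(X,Y)$ directly and then observes that $(X,Y)\mapsto\tr Q(X,-,Y)$ is symmetric bilinear (hence it suffices to check the diagonal), whereas you restrict to $\Ric(X,X)$ from the start and polarize at the end; the antisymmetry $Q(Z,X)=-Q(X,Z)$ that you invoke to pass from $\tr Q(-,X,X)$ to $\tr Q(X,-,X)$ introduces a sign, but since the latter trace is zero this is harmless.
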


\begin{proof} 
For any $X,Y\in\mm$, the Ricci tensor is 
$$\begin{array}{ll}\Ric(X,Y)=\tr R(-,X)Y&=g(X,Y)\tr\id_\mm-\tr g(-,Y)X+\tr Q(-,X,Y)\\
&=g(X,Y)\dim M-g(X,Y)-\tr Q(X,-,Y)\end{array}
.$$
In particular $\mm\times\mm\to\RR,\ (X,Y)\mapsto \tr Q(X,-,Y)$ is a bilinear symmetric map, since it is obtained as a difference of bilinear symmetric maps. This permits to prove that   it is a zero map only by checking that  every $X\in\mm$ is isotropic, that is, $\tr Q(X,-,X)=0$. Indeed, if $X\in V\otimes T$, this is true by Proposition~\ref{pr_lacuenta} (b); and, if $X\in\spf(V)$, it is clear that the endomorphism $Q(X,-,X)\in\gl(\mm)$ is identically zero by Lemma~\ref{le_defQ}.

The conclusion about the scalar curvature is immediate.
\end{proof}

\begin{remark}\label{re_freu}
We would like to point out that $(T,\{ \,,\,,\,\})$, with the   brace-product defined in Eq.~\eqref{eq_llave}, is a \emph{Freudenthal triple system} \cite{Meyberg}. It is well-known the close relationship between these two ternary structures, symplectic  triple systems and Freudenthal triple systems, but  don't forget that our {\it{very geometric} }tensor $Q$ is defined by
\[
Q(\xi+\sum_i a_i\otimes x_i, \xi'+\sum_jb_j\otimes y_j,\xi''+\sum_kc_k\otimes z_k)=\sum_{i,j,k}\langle a_i,b_j\rangle c_k\otimes \{x_i,y_j,z_k\}.
\]
\end{remark}\smallskip

Before I conclude, I should like to
add some additional final thoughts, because this paper poses more questions than those  it answers. We have studied some families of homogeneous manifolds which are Einstein. What manifolds are we speaking of, exactly? What else can be said about their topology and geometry? And taking into consideration that there is a 3-Sasakian geometry, not only 3-Sasakian homogeneous manifolds, a last question arises: is there a geometry whose homogeneous examples are precisely these manifolds based on symplectic triple systems?


\end{document}